\newcommand{\be}{\begin{equation}}
\newcommand{\ee}{\end{equation}}
\newcommand{\benum}{\begin{enumerate}}
\newcommand{\eenum}{\end{enumerate}}
\newcommand{\bit}{\begin{itemize}}
\newcommand{\eit}{\end{itemize}}
\newtheorem{thm}{Theorem}[section]
\newtheorem{lemma}{Lemma}[section]
\newtheorem{prop}{Proposition}[section]
\newtheorem{claim}{Claim}
\begin{document}
\def\s{\subseteq}
\def\n{\noindent}
\def\se{\setminus}
\def\dia{\diamondsuit}
\def\la{\langle}
\def\ra{\rangle}


\title{On the sharp upper and lower bounds of multiplicative Zagreb indices of graphs with connectivity at most $k$}

\author{Shaohui Wang\footnote{  Authors'email address: S. Wang (e-mail:
shaohuiwang@yahoo.com; swang@adelphi.edu).}  \\
\small\emph { Department of Mathematics and Computer Science, Adelphi University, Garden City, NY 11550, USA} }
\date{}
\maketitle

\begin{abstract}

For a (molecular) graph, the first multiplicative Zagreb index $\prod_1(G) $ is the product of the square of every vertex degree, and the second multiplicative Zagreb index $\prod_2(G) $ is the product of the products of degrees of pairs of adjacent vertices.   In this paper, we explore graphs in terms of (edge) connectivity. The maximum and minimum values of $\prod_1(G) $ and $\prod_2(G) $ of graphs with connectivity at most $k$ are provided.  In addition, the corresponding extremal graphs are characterized,
and our results extend and enrich some known conclusions.

\vskip 2mm \noindent {\bf Keywords:}   Connectivity; Edge connectivity;  Extremal bounds;  Multiplicative Zagreb  indices. \\
{\bf AMS subject classification:} 05C05,  05C12
\end{abstract}

\section{Introduction}
A topological index is a single number which can be used to describe some properties of  a molecular graph that is  a finite simple graph, representing the carbon-atom skeleton
of an organic molecule of a hydrocarbon.
In recent decades these could be useful for the study of quantitative structure-property relationships (QSPR) and quantitative
structure-activity relationships (QSAR)  and for  the structural essence of biological and chemical compounds.  The well-known Randi$\acute{c}$ index is one of the most important topological indices.

In 1975, Randi$\acute{c}$ introduced a  moleculor quantity of branching index \cite{1}, which has been known as the famous Randi$\acute{c}$ connectivity index that is most useful structural descriptor in QSPR and QSAR, see \cite{2,3,4,5}.  Mathematicians have considerable interests in the structural and applied issues of  Randi$\acute{c}$ connectivity index, see \cite{6,7,8,9}. Based on the successful considerations,  Zagreb indices\cite{10} are introduced as an expected formula for the total $\pi$-electron energy of conjugated molecules as follows.
\begin{eqnarray} \nonumber
M_1(G) = \sum_{u \in V(G)} d(u)^2
~\text{ and}
~ M_2(G) = \sum_{uv \in E(G)} d(u)d(v),
\end{eqnarray}
where $G$ is a (molecular) graph, $uv$ is a bond between two atoms $u$ and $v$, and $d(u)$ (or $d(v)$, respectively) is the number of atoms that are connected with $u$ (or $v$, respectively).
Zagreb indices are employed as molecular descriptors in QSPR and QSAR, see \cite{11,12}.  Recently, Todeschini et al.(2010) \cite{13,14} proposed the
following multiplicative variants of molecular structure descriptors:
\begin{eqnarray} \nonumber
 \prod_1(G) = \prod_{u \in V(G)} d(u)^2 ~
\text{ and}\;\;
 \prod_2(G) = \prod_{uv \in E(G)} d(u)d(v) = \prod_{u \in V(G)}
d(u)^{d(u)}.
\end{eqnarray}
In the interdisplinary of mathemactics, chemistry and physics, it is not surprising that  there are numerous studies of properties of the (multiplicative)Zagreb indices of molecular graphs \cite{1000,1001,1002,1003,1004,1005,1006}. 

In view of these results, researchers are intereted with 
finding upper and lower bounds for multiplicative Zagreb indices of graphs  and characterizing the graphs
in which the maximal (respectively, minimal) index values are attained, respectively. In fact, investigations of the above problems, mathematical and computational properties of Zagreb
indices have also been considered in \cite{16, 17, 18}. Other directions of investigation include studies of relation between multiplicative Zagreb indices and the corresponding invariant of elements of the graph G (vertices, pendent vertices, diameter, maximum degree, girth, cut edge, cut vertex, connectivity, perfect matching). As examples, the first and second multiplicative Zagreb indices for a class of chemical dendrimers are explored by Iranmanesh et al.~\cite{Iranmanesh20102}.
Based on trees, unicyclic
graphs and bicyclic graphs, Borovi\'canin et al.~\cite{Borov2016} introduced the
bounds on Zagreb indices with a fixed  domination number.
 The maximum and minimum
Zagreb indices of trees with given
number of vertices of maximum degree are proposed by  Borovi$\acute{c}$anin and Lampert\cite{Bojana2015}.  Xu and Hua~\cite{Xu20102} introduced an unified approach to
characterize extremal maximal and minimal multiplicative Zagreb
indices, respectively.
Considering the high dimension trees, $k$-trees, Wang and Wei~\cite{Wang2015} provided the maximum and minimum indices of these indices and the corresponding extreme graphs are provided.
 Some sharp upper bounds for
$\prod_1$-index and $\prod_2$-index in terms of graph parameters are investigated by Liu and Zhang \cite{Liuz20102},
including an order, a size and a radius.
Wang et al. \cite{WW} provided  sharp bounds for these indices of of trees with given number of vertices of maximum degree.
  The bounds for
the moments and the probability generating function of these
indices in a randomly chosen molecular graph with tree structure
of order $n$ are studied by Kazemi~\cite{Ramin2016}.  Li and Zhao obtained upper bounds on Zagreb indices of bicyclic graphs with a given matching number~\cite{LL}.

In light of the information available for multiplicative Zagreb indices,
and inspired by  above results, in this paper we further
investigate these indices of graphs with (edge) connectivity. We give some basic
properties of the first and the second multiplicative Zagreb indices.  The maximum and minimum values of $\prod_1(G) $ and $\prod_2(G) $ of graphs with (edge) connectivity at most $k$ are provided.  In addition, the
corresponding extreme graphs are charaterized.
 In our exposition we will use the terminology and notations of (chemical) graph theory(see \cite{BB,NT}). 

\section{Preliminaries}
Let $G$ be a simple  connected  graph, denoted by $G = (V(G), E(G))$, in which $V = V (G)$ is  vertex
set and $E = E(G)$ is  edge set. If a vertex $v \in V(G)$,  then the neighborhood of  $v $ denotes the set $N(v) = N_G(v) = \{w \in V(G), vw \in E(G)\}$, and $d_G(v)$ (or   $d(v)$)  is the degree of $v$ with $ d_{G}(v) = |N(v)|$. $n_i$ is the number of vertices of degree $i \geq 0$. 
For $S \subseteq V(G)$ and  $F \subseteq E(G)$,   we use $G[S]$ for the subgraph of $G$ induced by the vertex set $S$, $G - S$ for the subgraph induced by  $V(G) - S$ and $G - F$ for the subgraph of G obtained by deleting $F$. If $G-S$ contains at least 2 components, then $S$ is said to be a vertex cut set of $G$. Similarly, if $G-F$ contains at least 2 components, then $E$ is called an edge cut set.

A graph $G$ is said to be $k$-connected with $k \geq 1$, if either $G$ is complete graph $K_{k+1}$, or it has at least $k+2$ vertices and contains no $(k-1)$-vertex cut.   
The connectivity of $G$, denoted by $\kappa(G)$, is defined as the maximal value of $k$ for which a connected graph $G$ is $k$-connected. Similarly, 
for $k \geq 1,$ a graph $G$ is called $k$-edge-connected if it has at least two vertices and does not contain an $(k-1)$-edge cut. The maximal value of $k$ for which a connected graph $G$ is $k$-edge-connected is said to be  the edge connectivity
of $G$, denoted by $\kappa'(G)$. According to above definitions, the following proposition is obtained.

\begin{prop} \label{p3} Let $G$ be a graph with $n$ vertices. Then \\
(i) $\kappa(G) \leq \kappa'(G) \leq n-1,$
\\
(ii) $\kappa(G) = n-1, \kappa'(G) = n-1$ and $G \cong K_n$ are equivalent.
\end{prop}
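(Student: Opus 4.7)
The plan is to handle the two parts separately, with (ii) reducing quickly to consequences of (i) plus an inspection of $K_n$.

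For (i), I would first dispose of $\kappa'(G)\leq n-1$ by the standard observation that the set of all edges incident to a fixed vertex $v$ forms an edge cut (it isolates $v$), so $\kappa'(G)\leq d(v)\leq n-1$ for any $v$, giving in fact $\kappa'(G)\leq \delta(G)$. The nontrivial inequality $\kappa(G)\leq \kappa'(G)$ is Whitney's classical theorem, and I would prove it by constructing a vertex cut from a minimum edge cut. Let $F$ be an edge cut of size $k'=\kappa'(G)$ that partitions $V(G)$ into nonempty sets $V_1,V_2$ with no edges of $G$ between them apart from $F$. I would split into two cases. If $G\cong K_n$, then by the definition in the paper $\kappa(G)=n-1=\kappa'(G)$ and there is nothing to prove. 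Otherwise pick a vertex $u\in V_1$ that is an endpoint of some edge of $F$, and let
\[
S=(N(u)\cap V_2)\;\cup\;\bigl\{x\in V_1\setminus\{u\}: x \text{ is an endpoint of some edge of } F\bigr\}.
\]
Each edge of $F$ contributes at most one vertex to $S$ (either its $V_2$-endpoint, when the $V_1$-endpoint is $u$, or its $V_1$-endpoint, otherwise), so $|S|\leq |F|=k'$. One then checks $S$ is indeed a vertex cut: removing $S$ from $G$ separates $u$ from $V_2\setminus N(u)$, provided the latter is nonempty; the degenerate case $V_2\subseteq N(u)$ can be handled by a symmetric choice on the $V_2$ side, or is absorbed into the complete-graph case.

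For (ii), I would argue the three equivalences in a triangle. The implication $G\cong K_n\Rightarrow \kappa(G)=\kappa'(G)=n-1$ follows directly from the paper's own definition of $k$-connectedness (which sets $\kappa(K_n)=n-1$ by convention) together with the fact that any proper subset of $E(K_n)$ leaves the graph connected, so $\kappa'(K_n)=\delta(K_n)=n-1$. For $\kappa'(G)=n-1\Rightarrow G\cong K_n$, I would use the bound $\kappa'(G)\leq\delta(G)\leq n-1$ established in (i): equality forces $\delta(G)=n-1$, so every vertex has degree $n-1$ and $G\cong K_n$. Finally, $\kappa(G)=n-1\Rightarrow \kappa'(G)=n-1$ is immediate from part (i), returning us to the previous case.

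The main obstacle is the construction in Whitney's inequality $\kappa(G)\leq\kappa'(G)$: the definition of $S$ has to be chosen so that it is simultaneously (a) bounded in size by $|F|$ and (b) a genuine vertex cut. The only technical nuisance is the boundary situation where $V_2=N(u)$, which must be dispatched either by symmetry (swapping the roles of $V_1,V_2$) or by noting that in that degenerate configuration a direct count of degrees already gives $\kappa(G)\leq k'$. Everything else in the proposition is definitional bookkeeping.
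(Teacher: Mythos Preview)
The paper does not actually prove this proposition. It simply writes ``According to above definitions, the following proposition is obtained'' and then states it without argument; in other words, the paper treats Whitney's inequality $\kappa(G)\le\kappa'(G)$ and the characterization of $K_n$ as standard background facts. So there is no paper proof to compare against beyond that one sentence.

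Your proposal is correct and supplies exactly the details the paper suppresses. The argument for $\kappa'(G)\le\delta(G)\le n-1$ and the triangle of implications in (ii) are clean. For Whitney's inequality your construction of the vertex cut $S$ from a minimum edge cut $F$ is the textbook one, and you correctly flag the one genuine subtlety, the degenerate case $V_2\subseteq N(u)$. Your suggested fixes (symmetry, or absorbing it into the complete-graph case) are the right instincts but are stated a bit loosely: the clean way to finish is to note that if every vertex of $V_1$ is adjacent to every vertex of $V_2$ then $|F|=|V_1|\,|V_2|\ge n-1\ge\kappa(G)$, while if some $x\in V_1$ has a non-neighbour in $V_2$ you can run your construction with $u=x$ (and symmetrically with the roles of $V_1,V_2$ swapped). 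With that tightening the proof is complete; relative to the paper you have simply written out a classical result that the author assumed.
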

Let $\mathbb{V}_n^k$ be a set of graphs with $n$ vertices and $\kappa(G) \leq k \leq n-1.$ Denote $\mathbb{E}_n^k$ by a set of graphs with $n$ vertices and $\kappa'(G) \leq k \leq n-1.$  For $|V(G)| = n$  and $|E(G)| = n-1$,  $G$ is a tree. Let $P_n$ and $S_n$ be special trees: a path and a star of n vertices. $K_n$ is a complete graph. The graph $K_n^k$ is obtained by joining $k$ vertices of $K_{n-1}$ to an isolated vertex, see Fig 1. Then $K_n^k \in \mathbb{E}_n^k \subset \mathbb{V}_n^k$. 
\begin{figure}[htbp]
    \centering
    \includegraphics[width=4in]{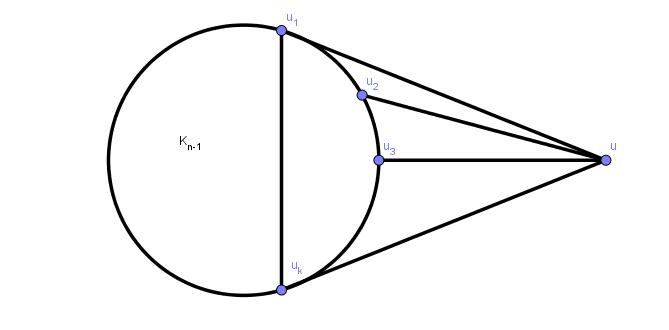}
    \caption{ $K_n^k$.}
    \label{f1}
\end{figure} 

Considering the concepts of $\prod_1(G)$ and $\prod_2(G)$, the  following proposition is routinely obtained.

\begin{prop}\label{p4}
Let $e$ be an edge of a graph $G \in  \mathbb{V}_n^k$ ($\mathbb{E}_n^k$, respectively). Then\\
(i) $G- e \in \mathbb{V}_n^k$ ($\mathbb{E}_n^k$, respectively),\\
(ii) $\prod_i(G-e) < \prod_i({G}).$
\end{prop}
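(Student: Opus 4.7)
The plan is to treat parts (i) and (ii) separately; both reduce to elementary facts, part (i) about how $\kappa$ and $\kappa'$ behave under edge deletion, and part (ii) about how $\prod_1$ and $\prod_2$ change when the degrees of two adjacent vertices each drop by one.

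For (i), I would invoke the standard graph-theoretic fact that removing an edge can only weaken connectivity. Any vertex cut of $G$ remains a vertex cut of $G-e$ (with a routine check when $G = K_n$, in which case $\kappa$ drops from $n-1$ to $n-2$), so $\kappa(G-e) \le \kappa(G)$; analogously, any minimum edge cut of $G$ together with $e$ disconnects $G-e$, giving $\kappa'(G-e) \le \kappa'(G)$. Combined with the hypothesis $\kappa(G) \le k$ (respectively $\kappa'(G) \le k$) and the fact that $G-e$ still has $n$ vertices, this yields $G - e \in \mathbb{V}_n^k$ (respectively $\mathbb{E}_n^k$).

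For (ii), write $e = uv$ and observe that only the degrees of $u$ and $v$ change: $d_{G-e}(u) = d_G(u) - 1$, $d_{G-e}(v) = d_G(v) - 1$, while $d_{G-e}(w) = d_G(w)$ for every other vertex $w$. Factoring out the unchanged contributions gives
\begin{equation*}
\frac{\prod_1(G-e)}{\prod_1(G)} = \left(\frac{d_G(u)-1}{d_G(u)}\right)^{2}\left(\frac{d_G(v)-1}{d_G(v)}\right)^{2}, \quad \frac{\prod_2(G-e)}{\prod_2(G)} = \prod_{w \in \{u,v\}} \frac{(d_G(w)-1)^{d_G(w)-1}}{d_G(w)^{d_G(w)}}.
\end{equation*}
Each factor of the first ratio lies in $[0,1)$ since $d_G(u), d_G(v) \ge 1$. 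For the second ratio, the key calculation is that $\frac{(d-1)^{d-1}}{d^d} = \frac{1}{d}\left(1-\frac{1}{d}\right)^{d-1} < 1$ for $d \ge 2$, forcing a strict decrease whenever both endpoints of $e$ have degree at least $2$.

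The main obstacle I anticipate is the degenerate boundary case $d_G(u) = d_G(v) = 1$, i.e., $G = K_2$, where under the convention $0^0 = 1$ the $\prod_2$ ratio equals $1$ rather than being strictly less than $1$. The tacit assumption $n \ge 3$ in the paper rules this out, and when exactly one endpoint of $e$ has degree $1$ the other endpoint contributes a factor strictly less than $1$, preserving the strict inequality. Multiplying the factors and substituting back then yields $\prod_i(G-e) < \prod_i(G)$ for $i = 1, 2$, completing the proof.
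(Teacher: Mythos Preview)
The paper does not supply a proof here; it simply declares the proposition ``routinely obtained'' from the definitions of $\prod_1$ and $\prod_2$. Your argument correctly fills in those routine details---the monotonicity of $\kappa$ and $\kappa'$ under edge deletion for (i), and the explicit degree-ratio computation (with the $d=1$ boundary case) for (ii)---and is precisely the kind of verification the paper has in mind.
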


In addition, by elementary calculations, these two statements are deduced.

\begin{prop}\label{p1}
 If $m \geq 0$, then $F_1(x) = \frac{(x+m)^x}{(x-1+m)^{x-1}}$ is an increasing function.
\end{prop}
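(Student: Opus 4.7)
The plan is to show $F_1'(x)>0$ by working with $\log F_1$ instead of $F_1$ itself, since $F_1(x)>0$ on the relevant domain (we may assume $x\ge 1$, so that $x+m$ and $x-1+m$ are positive given $m\ge 0$). Set
\begin{equation*}
g(x) \;=\; \log F_1(x) \;=\; x\log(x+m) - (x-1)\log(x-1+m).
\end{equation*}
Then $F_1$ is increasing if and only if $g'(x)>0$, so it suffices to analyze $g'$.

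Next I would compute $g'(x)$ directly:
\begin{equation*}
g'(x) \;=\; \log(x+m) + \frac{x}{x+m} \;-\; \log(x-1+m) - \frac{x-1}{x-1+m}.
\end{equation*}
I would then group this into two pieces: a logarithmic piece
$\log\!\bigl(\tfrac{x+m}{x-1+m}\bigr)$ and a rational piece
$\tfrac{x}{x+m}-\tfrac{x-1}{x-1+m}$. The logarithmic piece is strictly positive because $x+m>x-1+m>0$. For the rational piece, the useful trick is to rewrite each fraction as $1-\tfrac{m}{x+m}$ and $1-\tfrac{m}{x-1+m}$, giving
\begin{equation*}
\frac{x}{x+m} - \frac{x-1}{x-1+m} \;=\; m\!\left(\frac{1}{x-1+m} - \frac{1}{x+m}\right) \;\ge\; 0,
\end{equation*}
with equality only when $m=0$. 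Adding the two pieces yields $g'(x)>0$.

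There is no real obstacle here — the only subtlety is ensuring the denominators are positive, which is handled by the hypothesis $m\ge 0$ together with the implicit assumption $x\ge 1$ coming from the application (vertex degrees in later arguments). The rewriting $\tfrac{x}{x+m}=1-\tfrac{m}{x+m}$ is the single clever step that makes the sign of $g'$ transparent; everything else is routine differentiation.
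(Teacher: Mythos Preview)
Your proof is correct: taking logarithms and verifying $g'(x)>0$ via the decomposition into the logarithmic term $\log\frac{x+m}{x-1+m}>0$ and the rational term $m\bigl(\frac{1}{x-1+m}-\frac{1}{x+m}\bigr)\ge 0$ is exactly the kind of routine computation the paper has in mind. The paper itself does not give a detailed argument---it simply states that the proposition follows ``by elementary calculations''---so your write-up is a faithful fleshing-out of that claim rather than a different route. The only minor quibble is the domain: with $m=0$ you need $x>1$ (not $x\ge 1$) for $x-1+m>0$, but since the proposition is only applied in Lemma~3.3 at integer arguments $j\ge 2$ and $n-k-j+1\ge j\ge 2$, this causes no trouble.
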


\begin{prop}\label{p2}
 If $m \geq 0$, then  $F_2(x) = \frac{x^x}{(x+m)^{x+m}}$ is a decreasing function.
\end{prop}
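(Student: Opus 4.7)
The plan is to convert the multiplicative form of $F_2$ into an additive one by taking logarithms, then to show that the derivative is nonpositive. Set $g(x)=\ln F_2(x)=x\ln x-(x+m)\ln(x+m)$ on the domain $x>0$. This is well defined because $x+m>0$ whenever $x>0$ and $m\geq 0$, and both $x^x$ and $(x+m)^{x+m}$ are positive so the logarithm commutes with monotonicity.

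Next I would differentiate term by term, using $\frac{d}{dx}[t\ln t]=\ln t+1$. This gives
\begin{equation*}
g'(x)=\bigl(\ln x+1\bigr)-\bigl(\ln(x+m)+1\bigr)=\ln\frac{x}{x+m}.
\end{equation*}
Since $m\geq 0$ and $x>0$, we have $0<\tfrac{x}{x+m}\leq 1$, so $g'(x)\leq 0$, with equality only in the degenerate case $m=0$. Hence $g$ is (weakly) decreasing, and exponentiating preserves monotonicity because $\exp$ is increasing, so $F_2(x)=e^{g(x)}$ is decreasing. For the strict statement one simply notes that when $m>0$ the inequality $\tfrac{x}{x+m}<1$ is strict, and so $g'(x)<0$ everywhere.

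There is essentially no obstacle here; the only subtle point is being explicit about the domain $x>0$ and remembering that the $m=0$ case collapses to $F_2\equiv 1$, which one should either allow under ``decreasing'' understood in the weak sense, or exclude at the outset by assuming $m>0$ in the applications. Since the later sections will apply $F_2$ with $x$ a positive integer vertex degree and $m$ a positive integer increment, this does not affect the intended use.
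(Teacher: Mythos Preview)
Your argument is correct: taking logarithms and computing $g'(x)=\ln\dfrac{x}{x+m}\le 0$ is exactly the routine verification the paper has in mind when it says the proposition is ``deduced by elementary calculations.'' The paper gives no explicit proof, so your write-up is in fact more detailed than the original; your remarks on the domain and the degenerate case $m=0$ are appropriate caveats.
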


\section{Lemmas and main results}
In this section, the   maximal and minimal multiplicative Zagreb indices of graphs with connectivity at most $k$ in $\mathbb{V}_n^k$ and $\mathbb{E}_n^k$ are determined. The corresponding extremal graphs shall be characterized.
We first provide some lemmas, which are very important and will be used in the proof of our main results.

\begin{lemma}\label{l1}\cite{Iranmanesh20102}
Let $T$ be a tree of n vertices. If $T$ is not $P_n$ or $S_n$, then $\prod_1{(T)} > \prod_1(S_n)$ and $\prod_1(T) > P_n$.
\end{lemma}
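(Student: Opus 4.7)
The statement is the standard pair of inequalities putting $S_n$ as the $\prod_1$-minimizer and $P_n$ as the $\prod_1$-maximizer among trees on $n$ vertices; the second displayed inequality ``$\prod_1(T) > P_n$'' is evidently a typo for $\prod_1(T) < \prod_1(P_n)$, since $\prod_1(P_n) = 4^{n-2}$ is known to be the maximum value. My plan is to reduce both claims to a single application of Karamata's inequality on tree degree sequences.

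First I set up the framework. For any tree $T$ on $n$ vertices we have $\sum_v d(v) = 2(n-1)$, at least two leaves, and $\log \prod_1(T) = 2\sum_v \log d(v)$. Since $\log$ is strictly concave, Karamata's theorem says that if one degree sequence majorizes another (sorted in decreasing order and having the same total sum $2(n-1)$), then the majorizing sequence produces the smaller value of $\sum \log d(v)$, and the inequality is strict whenever the two multisets of degrees are distinct.

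Next I verify the lower bound by showing $\mathbf{d}^{S_n} = (n-1,1,\ldots,1)$ majorizes every tree degree sequence. Its top-$k$ partial sum equals $n+k-2$; for any tree, the top-$k$ sum is at most $2(n-1) - (n-k) = n+k-2$, because the remaining $n-k$ degrees contribute at least one each to the total $2(n-1)$. Since $S_n$ is the unique tree realizing the sequence $(n-1,1,\ldots,1)$, the hypothesis $T \neq S_n$ forces distinct multisets, and strict Karamata delivers $\prod_1(T) > \prod_1(S_n)$.

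For the upper bound I show dually that every tree degree sequence majorizes $\mathbf{d}^{P_n} = (2,\ldots,2,1,1)$, whose top-$k$ partial sum is $2k$ for $1 \leq k \leq n-2$. If $k$ does not exceed the number of non-leaves, the top $k$ degrees are each $\geq 2$, so the sum is $\geq 2k$ immediately; otherwise a short count using that the non-leaf degrees sum to $2(n-1) - \ell$ (with $\ell \geq 2$ the number of leaves) yields top-$k$ sum equal to $n+k-2 \geq 2k$ for $k \leq n-2$. The key uniqueness fact is that any tree with exactly two leaves must be a path (peel a leaf, iterate, and the property is preserved), so $T \neq P_n$ forces a different degree multiset, and strict Karamata delivers $\prod_1(T) < \prod_1(P_n)$. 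The main obstacles are the careful bookkeeping of the top-$k$ sum in the upper-bound case and, more importantly, invoking strict concavity of $\log$ to upgrade Karamata's weak inequality to the strict inequalities claimed — a step that rests on the uniqueness of $S_n$ and $P_n$ among trees realizing their respective extremal degree sequences.
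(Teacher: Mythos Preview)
Your proof via majorization and Karamata's inequality is correct; the partial-sum computations are right in both directions, and the uniqueness of $S_n$ and $P_n$ as trees realizing their respective degree sequences justifies the strict inequalities exactly as you say. Your reading of the typo (``$\prod_1(T) > P_n$'' should be $\prod_1(T) < \prod_1(P_n)$) is also the only sensible one and matches how the paper uses the lemma later.

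There is nothing to compare your approach against: the paper does not prove Lemma~\ref{l1} at all but simply cites it from Iranmanesh, Hosseinzadeh and Gutman (reference~\cite{Iranmanesh20102}). So your argument stands on its own as an independent, self-contained verification of the cited result.
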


Considering the definitions of $\prod_1(G)$ and $\prod_2(G)$, we have the following lemma.
\begin{lemma}\label{l2}
Let $u, v \in V(G)$ and $uv \notin E(G)$. Then $$\prod_1(G+uv) > \prod_1(G)~\text{and}~\prod_2(G+uv) > \prod_1(G).$$
\end{lemma}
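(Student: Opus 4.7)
The plan is to localize the change in $\prod_i$ to the two affected vertices. Adding the edge $uv$ to $G$ leaves every vertex degree fixed except that $d(u)$ and $d(v)$ each increase by one. Writing $a = d_G(u)$ and $b = d_G(v)$, this means the ratio $\prod_i(G+uv)/\prod_i(G)$ telescopes to an expression involving only the factors at $u$ and $v$; every other vertex contributes the same factor to numerator and denominator and therefore cancels.

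First I would dispose of $\prod_1$. After the telescoping, the ratio reduces to $\left((a+1)(b+1)/(ab)\right)^2$, which is a purely arithmetic inequality: as long as $a,b \geq 1$ it strictly exceeds $1$. (If one of $a,b$ equals $0$, then $\prod_1(G) = 0 < \prod_1(G+uv)$, so the inequality is vacuous; in every application of interest here $\delta(G) \geq 1$ anyway, since we are working inside $\mathbb{V}_n^k$ or $\mathbb{E}_n^k$ with $k \geq 1$.)

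For $\prod_2$, using the standard reformulation $\prod_2(G) = \prod_{w \in V(G)} d(w)^{d(w)}$, the same cancellation reduces matters to showing $(a+1)^{a+1}(b+1)^{b+1} > a^a b^b$. Here I would invoke Proposition \ref{p1} with $m = 0$: this asserts that $F_1(x) = x^x/(x-1)^{x-1}$ is strictly increasing. Since $F_1(1) = 1$ (under the usual convention $0^0 = 1$), for $a \geq 1$ one has $F_1(a+1) = (a+1)^{a+1}/a^a > F_1(1) = 1$, and symmetrically in $b$. Multiplying the two inequalities gives the ratio strictly greater than $1$, as needed. (Alternatively, one may bypass Proposition \ref{p1} by the one-line estimate $(a+1)^{a+1}/a^a = (a+1)\,\big((a+1)/a\big)^a > a+1$.)

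The proof is really just the degree-change observation followed by an elementary calculation, so I do not expect a genuine obstacle; the only point requiring any care is the bookkeeping for isolated endpoints, which is harmless in the intended applications where $\delta(G) \geq 1$. I would also remark that the statement of the lemma contains an evident typographical slip ($\prod_1(G)$ on the right-hand side of the second inequality) which should read $\prod_2(G)$, and that is the version my argument establishes.
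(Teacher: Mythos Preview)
Your argument is correct. The paper itself does not give a proof of this lemma at all: it simply remarks that the statement follows from the definitions of $\prod_1$ and $\prod_2$ (and the equivalent edge-deletion version already appears as Proposition~\ref{p4}(ii)). Your write-up supplies exactly the routine verification the paper omits---localizing the change to the two endpoints and checking the resulting ratio exceeds $1$---so there is no substantive difference in approach, only in level of detail. Your observation about the typographical slip ($\prod_1(G)$ in place of $\prod_2(G)$ on the right of the second inequality) is also correct.
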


Given two graphs $G_1$ and $G_2$, if $V(G_1) \cap V(G_2) = \phi$, then the join graph $G_1 \oplus  G_2$ is a graph with vertex set $V(G_1) \cup V(G_2)$ and edge set $E(G_1) \cup E(G_2) \cup \{uv, u\in V(G_1), v \in V(G_2)\}$. 
\begin{figure}[htbp]
    \centering
    \includegraphics[width=5in]{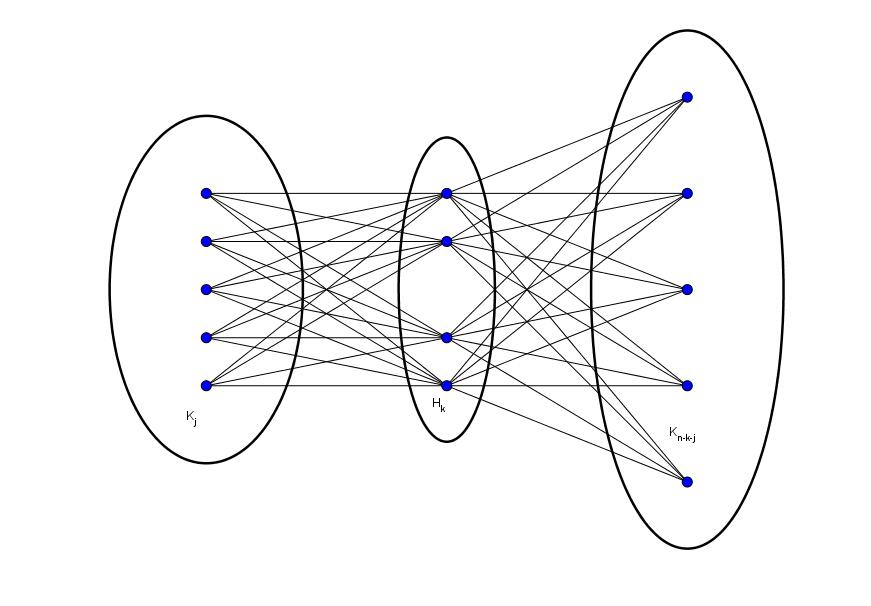}
    \caption{ $G(j,n-k-j) = K_j \oplus H_k \oplus K_{n-k-j}$.}
    \label{f1}
\end{figure} 
\begin{lemma}\label{l3} Let $G(j,n-k-j) = K_j \oplus H_k \oplus K_{n-k-j}$ be a graph with $n$ vertices, in which $K_j$ and $K_{n-k-j}$ are cliques, and $H_k$ is a graph with $k$ vertices, see Fig 2. If $k \geq 1$ and $2 \leq j \leq \frac{n-k}{2}$, then
$$\prod_1(G(j,n-k-j)) < \prod_1(G(1, n-k-1)).$$
\end{lemma}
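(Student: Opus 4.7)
The plan is to show that transferring one vertex from the smaller clique into the larger clique strictly increases $\prod_1$, then iterate this single-vertex shift until the small side contains a single vertex, arriving at $G(1,n-k-1)$.

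First I would read off the degree sequence of $G(j,n-k-j)$. Since $\oplus$ denotes the join, and since there are no edges between $K_j$ and $K_{n-k-j}$, every vertex of $K_j$ has degree $j+k-1$, every vertex of $K_{n-k-j}$ has degree $n-j-1$, and every vertex $v$ of $H_k$ has degree $d_{H_k}(v)+n-k$. Crucially, the $H_k$-contribution to $\prod_1$ does not depend on $j$, so with the constant $C = \prod_{v \in V(H_k)}(d_{H_k}(v)+n-k)^2$ one has
\[
\prod_1(G(j,n-k-j)) \;=\; C \cdot (j+k-1)^{2j}\,(n-j-1)^{2(n-k-j)}.
\]

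Next I would compare adjacent levels by forming the ratio
\[
R(j) \;:=\; \frac{\prod_1(G(j-1,n-k-j+1))}{\prod_1(G(j,n-k-j))} \;=\; \frac{(j+k-2)^{2(j-1)}(n-j)^{2(n-k-j+1)}}{(j+k-1)^{2j}(n-j-1)^{2(n-k-j)}}.
\]
Here it is natural to invoke Proposition \ref{p1} with $m = k-1 \geq 0$, i.e.\ $F_1(x) = (x+k-1)^{x}/(x+k-2)^{x-1}$. Substituting $x = j$ in the denominator and $x = n-k-j+1$ in the numerator repackages the ratio as
\[
R(j) \;=\; \left(\frac{F_1(n-k-j+1)}{F_1(j)}\right)^{\!2}.
\]

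The hypothesis $2 \leq j \leq (n-k)/2$ gives $n-k-j+1 \geq j+1 > j$, so the monotonicity in Proposition \ref{p1} forces $F_1(n-k-j+1) > F_1(j)$ and hence $R(j) > 1$. Iterating this strict inequality for indices $j, j-1, \ldots, 2$ yields the chain
\[
\prod_1(G(1,n-k-1)) > \prod_1(G(2,n-k-2)) > \cdots > \prod_1(G(j,n-k-j)),
\]
which is the claim. The only step that needs real attention is spotting the reparametrization that identifies the ratio $R(j)$ as a quotient of two values of $F_1$ taken at symmetric arguments $j$ and $n-k-j+1$; once that identification is made, the lemma drops out of Proposition \ref{p1}.
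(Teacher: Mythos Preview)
Your proof is correct and follows essentially the same route as the paper: compare $\prod_1(G(j,n-k-j))$ with $\prod_1(G(j-1,n-k-j+1))$ via a ratio, recognize this ratio as $\bigl(F_1(n-k-j+1)/F_1(j)\bigr)^2$ with $m=k-1$, apply Proposition~\ref{p1}, and iterate down to $j=1$. The only cosmetic difference is that the paper computes the reciprocal ratio $\prod_1(G_1)/\prod_1(G_2)<1$ and cancels the $H_k$-contribution directly inside the quotient rather than isolating it as your constant $C$.
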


\begin{proof}
We consider the graph from $G_1= G(j,n-k-j)$ to $G_2 = G(j-1,n-k-j+1)$. Note that if $v \in V(H_k)$ in $G_2$, then $d_{G_2}(v) = d_{G_1}(v)$;  if $v \in V(K_j)$ in $G_2$, then $d_{G_2}(v) = d_{G_1}(v) - 1= j+k-2$;  if $v \in V(K_{n-k-j+1})$ in $G_2$, then $d_{G_2}(v) = d_{G_1}(v)+1 = n-j$.  
By the concepts of $\prod_1$ and $\prod_2$, we have
\begin{eqnarray}
 \nonumber   \frac{\prod_1({G_1})}{\prod_1({G_2})}  
&&=  \frac{\prod_{v \in V(K_j)}d(v)^2 \prod_{v \in V(H_k)}d(v)^2 \prod_{v \in V(K_{n-k-j})}d(v)^2}
{\prod_{v \in V(K_{j-1})}d(v)^2 \prod_{v \in V(H_k)}d(v)^2 \prod_{v \in V(K_{n-k-j+1})}d(v)^2}
 \\  \nonumber 
&&= \frac{\big((j+k-1)^2\big)^j \big((n-j-1)^2 \big)^{n-k-j}}
{\big((j+k-2)^2\big)^{j-1} \big((n-j)^2 \big)^{n-k-j+1}}
  \\ \nonumber    
&&= \bigg(\frac{ \frac{\big(j+(k-1)\big)^j}{\big((j-1) + (k-1)\big)^{j-1} }  }{ \frac{\big((n-j-k+1) + (k -1)\big)^{n-k-j+1}}{\big((n-j-k) + (k-1)\big)^{n-k-j} }}\bigg)^2.
 \end{eqnarray}

Since $2 \leq j \leq \frac{n-k}{2}$, then $j \leq n-k-j < n-k-j+1$. By Proposition \ref{p1} and $k \geq 1$, we have $$\frac{\prod_1({G_1})}{\prod_1({G_2})} < 1,$$ that is, $\prod_1({G_1}) < \prod_1({G_2})$.

We can recursively use this process from $G_1$ to $G_2$, and obtain that 
$$\prod_1(G(j,n-k-j)) < \prod_1(G(j-1,n-k-j+1)) < \prod_1(G(j-2,n-k-j+2)) < \cdots < \prod_1(G(1, n-k-1)).$$
Therefore, $\prod_1(G(j,n-k-j)) < \prod_1(G(1, n-k-1))$. Thus, we complete the proof.
\end{proof}

\begin{lemma}\label{l4}
Let $G$ be a connected graph and $u, v \in V(G)$. Assume that $v_1, v_2, \ldots, v_s \in N(v)\setminus N(u)$, 
$1 \leq s \leq d(v)$. Let $G' = G - \{vv_1, vv_2, \ldots, vv_s \} + \{uv_1, uv_2, \ldots, uv_s\}$. 
If $d(u) \geq d(v)$ and $u$ is not adjacent to $v$, then
   $$\prod_2(G') > \prod_2(G). $$  
\end{lemma}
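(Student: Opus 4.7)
The plan is to convert the inequality into a pure one-variable monotonicity statement which is already recorded as Proposition~\ref{p2}. The key identity to invoke is the vertex-product form
\[
\prod\nolimits_2(G) \;=\; \prod_{w \in V(G)} d_G(w)^{d_G(w)},
\]
which lets me compare $\prod_2(G')$ and $\prod_2(G)$ by looking only at those vertices whose degrees actually change under the edge swap.

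First I would set $a = d_G(u)$, $b = d_G(v)$ and verify that only $u$ and $v$ see their degrees altered. Since $uv \notin E(G)$, we have $u \neq v_i$ for every $i$, so each $v_i$ loses the edge $vv_i$ but gains the edge $uv_i$, giving $d_{G'}(v_i) = d_G(v_i)$. Every vertex outside $\{u,v,v_1,\dots,v_s\}$ is trivially unchanged, and $d_{G'}(u) = a+s$, $d_{G'}(v) = b-s$.

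Next I would plug this into the vertex-product formula to obtain
\[
\frac{\prod_2(G')}{\prod_2(G)} \;=\; \frac{(a+s)^{a+s}(b-s)^{b-s}}{a^a\, b^b} \;=\; \frac{F_2(b-s)}{F_2(a)},
\]
where $F_2(x) = x^x/(x+s)^{x+s}$ is the function from Proposition~\ref{p2} with $m=s$. Because $s \geq 1$, we have $a \geq b > b-s$, and Proposition~\ref{p2} yields $F_2(a) < F_2(b-s)$, so the displayed ratio exceeds $1$, as desired.

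The only subtlety—and the closest thing to an obstacle—is the boundary case $s = b$, where $v$ becomes isolated in $G'$ and one of the factors in the vertex product reads $0^0$. Using the standard convention $0^0 = 1$ (which is consistent with the original edge-product definition of $\prod_2$, since an isolated vertex contributes no edges) keeps both the formula and the application of Proposition~\ref{p2} at $x=0$ valid; if one prefers to avoid the convention, the case $b-s=0$ can be dispatched by the direct estimate $(a+b)^{a+b} = (a+b)^a(a+b)^b > a^a b^b$, using $a,b \geq 1$. Everything else is bookkeeping.
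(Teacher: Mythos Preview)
Your proof is correct and essentially identical to the paper's: both use the vertex-product form of $\prod_2$, observe that only the degrees of $u$ and $v$ change, and reduce the inequality to the monotonicity of $F_2(x)=x^x/(x+s)^{x+s}$ from Proposition~\ref{p2} via $d(u)\ge d(v)>d(v)-s$. Your write-up is in fact slightly more careful than the paper's, since you explicitly check $u\neq v_i$ and address the boundary case $s=d(v)$, which the paper leaves implicit.
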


\begin{proof}
By the concept of $\prod_2(G)$, we have
 \begin{eqnarray}
   \frac{\prod_2(G)}{\prod_2(G')} =  
 \frac{d(u)^{d(u)} d(v)^{d(v)}}{(d(u)+s)^{d(u)+s}(d(v)-s)^{d(v)-s}}   = \frac{\big( \frac{d(u)^{d(u)}}{(d(u)+s)^{d(u)+s}}\big)}{\big( \frac{(d(v)-s)^{d(v)-s}}{d(v)^{d(v)}}\big)}.\nonumber
 \end{eqnarray}

Since $d(u) \geq d(v) > d(v) -s$ and by Proposition \ref{p2}, then  $$\frac{\prod_2(G)}{\prod_2(G')} < 1,$$ that is,
 $ \prod_2(G') > \prod_2(G)$. Thus, the lemma is proved.
\end{proof}

\begin{lemma}\label{l5}
 If $k \geq 1$ and $2 \leq j \leq \frac{n-k}{2}$, we have $$\prod_2(G(j,n-k-j)) < \prod_2(G(1, n-k-1)).$$
\end{lemma}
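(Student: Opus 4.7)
The plan is to mimic the inductive reduction used in Lemma~\ref{l3}: set $G_1 = G(j, n-k-j)$ and $G_2 = G(j-1, n-k-j+1)$, show that $\prod_2(G_1) < \prod_2(G_2)$, and then iterate until $j$ drops to $1$. With $a := j+k-1$, $b := n-j-1$, and $\ell := n-k-j$, each vertex of $H_k$ is adjacent to the same $n-k$ vertices outside $H_k$ in both graphs (since $j+(n-k-j) = (j-1)+(n-k-j+1)$), so its degree is preserved and its contribution to $\prod_2$ cancels in the ratio. The $j$ vertices of $K_j$ (each of degree $a$) become $j-1$ vertices of $K_{j-1}$ (each of degree $a-1$), and the $\ell$ vertices of $K_{n-k-j}$ (each of degree $b$) become $\ell+1$ vertices of $K_{n-k-j+1}$ (each of degree $b+1$). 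Using $\prod_2(G) = \prod_{v} d(v)^{d(v)}$ this yields
\[
\frac{\prod_2(G_1)}{\prod_2(G_2)} = \frac{a^{ja}\, b^{\ell b}}{(a-1)^{(j-1)(a-1)}\,(b+1)^{(\ell+1)(b+1)}}.
\]

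Taking logarithms and writing $f(x) := x\log x$, the desired inequality $\prod_2(G_1)<\prod_2(G_2)$ is $j f(a) + \ell f(b) < (j-1) f(a-1) + (\ell+1) f(b+1)$, which I rearrange as
\[
(j-1)\bigl[f(a)-f(a-1)\bigr] + f(a) \;<\; \ell\bigl[f(b+1)-f(b)\bigr] + f(b+1).
\]
The hypothesis $2 \le j \le (n-k)/2$ gives $a \le b$ (so $a-1 < b$ and $a < b+1$) and $j \le \ell$ (so $1 \le j-1 < \ell$). Proposition~\ref{p2} with $m=1$ says $F_2(x) = x^x/(x+1)^{x+1}$ is strictly decreasing, which, after negating logarithms, is exactly the statement that $f(x+1)-f(x)$ is strictly increasing in $x$. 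Applied at $x=a-1$ versus $x=b$, this gives $f(a)-f(a-1) < f(b+1)-f(b)$; combined with $j-1 < \ell$ and positivity, this yields $(j-1)[f(a)-f(a-1)] < \ell[f(b+1)-f(b)]$. Since $f$ is strictly increasing on $[1,\infty)$ and $a < b+1$, one also has $f(a) < f(b+1)$. Adding the two strict inequalities produces the displayed bound, hence $\prod_2(G_1)<\prod_2(G_2)$, and iterating gives the telescoping chain $\prod_2(G(j,n-k-j)) < \prod_2(G(j-1,n-k-j+1)) < \cdots < \prod_2(G(1,n-k-1))$.

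The main obstacle, in contrast with Lemma~\ref{l3}, is the asymmetry of the two sides: instead of perfect squares, one now has the genuinely different exponents $j(j+k-1)$ and $(n-k-j)(n-j-1)$. Splitting the log-ratio into the two bracketed summands above is the crucial manoeuvre, because it lets each of the two hypotheses $j-1<\ell$ and $a-1<b$ (both provided by the range $2\le j\le (n-k)/2$) carry its own piece of the inequality. A more direct attempt via Lemma~\ref{l4} --- moving the $j-1$ edges from some $v\in K_j$ to a neighbour $u\in K_{n-k-j}$ --- only produces an intermediate graph which is not $G(j-1,n-k-j+1)$, so the ratio-based proof above is preferable.
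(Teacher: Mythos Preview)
Your argument is correct, but it is not the paper's. You compute the ratio $\prod_2(G(j,\cdot))/\prod_2(G(j-1,\cdot))$ directly, split the log into the convexity term $(j-1)[f(a)-f(a-1)]$ versus $\ell[f(b+1)-f(b)]$ and the monotonicity term $f(a)$ versus $f(b+1)$, and iterate $j\to j-1$. The paper instead uses Lemma~\ref{l4} exactly in the way you discard at the end: pick $v_1\in K_j$ and $u_1\in K_{n-k-j}$ (non-adjacent, with $d(u_1)\ge d(v_1)$), transfer the $j-1$ edges $v_1v_2,\dots,v_1v_j$ to $u_1$, obtaining $G'$ with $\prod_2(G')\ge\prod_2(G(j,n-k-j))$; then, since $v_1$ is now joined only to $H_k$, add all missing edges between $\{v_2,\dots,v_j\}$ and $K_{n-k-j}$ and invoke Lemma~\ref{l2} to reach $G(1,n-k-1)$ with a strict increase. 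So the intermediate graph not being $G(j-1,n-k-j+1)$ is no obstacle --- the paper jumps straight to $j=1$ rather than iterating. Your approach is more self-contained (it never needs the edge-switching Lemma~\ref{l4}) and makes the parallel with Lemma~\ref{l3} explicit, at the cost of a slightly more delicate inequality; the paper's approach is shorter and more structural but leans on two auxiliary lemmas.
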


\begin{proof}
Let $V(K_j) = \{v_1, v_2, \cdots, v_j\}$ and $ V(K_{n-k-j}) = \{u_1, u_2, \cdots, u_{n-k-j}\}$. Note that vertex set $\{ v_2, v_3, \cdots v_j \} \subset N(v_1) \cap V(K_j)$. 
We create a new graph $G' = G(j,n-k-j) - \{v_1v_2, v_1v_3, \ldots, v_1v_j \} + \{u_1v_2, u_1v_3, \ldots, u_1v_j\}$. By $d(v_1) \leq d(u_1)$ and Lemma \ref{l4}, we have $\prod_2(G') \geq \prod_2(G(j,n-k-j))$.

Note that for $G'$, $v_1$ has neighbors in $V(H_k)$ only. Let $G'' = G' + \{v_iu_l, 2 \leq i \leq j, 1 \leq l \leq n-k-j$ and $v_iu_l \notin E(G')\}$.  By Lemma \ref{l2}, we have $\prod_2(G'') > \prod_2(G') \geq \prod_2(G)$.
Therefore $G'' \cong G(1, n-k-1)$ and $\prod_2(G(j,n-k-j)) < \prod_2(G(1, n-k-1))$. Thus the proof is complete.  
\end{proof}

 Next we will turn to prove our main results. In Theorems \ref{t1} and \ref{t2}, the sharp upper bounds of mutiplicative Zagreb indices of graphs in $\mathbb{V}_n^k$ and $\mathbb{E}_n^k$ are proposed.  

\begin{thm}\label{t1}
Let $G$ be a graph in $\mathbb{V}_n^k$. Then
\begin{eqnarray}&&\prod_1(G) \leq k^2 (n-k)^2k (n-2)^{2(n-k-1)}~ \text{and} \nonumber\\ \nonumber
&& \prod_2(G) \leq k^k (n-1)^{k(n-1)} (n-2)^{(n-2)(n-k-1)},
\end{eqnarray}
 where the equalities hold if and only if $G \cong K_n^k$. 
\end{thm}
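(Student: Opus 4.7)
The plan is to show that any maximizer $G^\ast$ of $\prod_1$ or $\prod_2$ in $\mathbb{V}_n^k$ must be isomorphic to $K_n^k$ by forcing the structure edge by edge, and then to evaluate the two indices on $K_n^k$. The main engine throughout is Lemma \ref{l2}, which says that adding a missing edge strictly increases both indices, together with the structural reductions in Lemmas \ref{l3} and \ref{l5}.

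First I would argue that a maximizer $G^\ast$ must be edge-maximal in $\mathbb{V}_n^k$: if some non-edge $uv$ could be added while preserving $\kappa(G) \le k$, Lemma \ref{l2} would strictly increase the index, contradicting maximality. In particular this forces $\kappa(G^\ast) = k$, because if $\kappa(G^\ast) < k$ then $G^\ast$ is not complete (by Proposition \ref{p3}(ii)), so some non-edge can still be added without the connectivity exceeding $k$. Next, pick a vertex cut $S$ of size exactly $k$; edge-maximality then forces $G^\ast[S] = K_k$, each component $C_1,\dots,C_t$ of $G^\ast - S$ to be a clique $K_{n_i}$, and every vertex of $S$ to be adjacent to every vertex of every $C_i$, since all such missing edges could be added without creating a path between distinct $C_i$'s. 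Thus $G^\ast \cong K_k \oplus (K_{n_1} \cup \cdots \cup K_{n_t})$ with $t \ge 2$ and $\sum n_i = n - k$.

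Next I would reduce to the two-component case. If $t \ge 3$, merging the last $t-1$ cliques (i.e.\ adding all missing edges between them) yields a graph $G' = K_k \oplus (K_{n_1} \cup K_{n - k - n_1})$; since the cut $S$ still separates the two remaining cliques, $\kappa(G') \le k$, so $G' \in \mathbb{V}_n^k$, and Lemma \ref{l2} gives $\prod_i(G') > \prod_i(G^\ast)$, contradicting maximality unless $t = 2$. So $G^\ast \cong G(j,n-k-j) = K_j \oplus K_k \oplus K_{n-k-j}$ for some $1 \le j \le \lfloor (n-k)/2 \rfloor$ (taking $H_k = K_k$ by another application of Lemma \ref{l2} inside the cut). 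Then Lemmas \ref{l3} and \ref{l5} handle exactly this family, forcing $j = 1$, i.e.\ $G^\ast \cong G(1, n-k-1) = K_n^k$.

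Finally I would evaluate the indices directly on $K_n^k$: it has one vertex of degree $k$ (the originally isolated vertex), $k$ vertices of degree $n-1$ (the vertices in $K_{n-1}$ joined to it), and $n-k-1$ vertices of degree $n-2$. This immediately yields
\[
\prod_1(K_n^k) = k^2 \cdot (n-1)^{2k} \cdot (n-2)^{2(n-k-1)}, \qquad \prod_2(K_n^k) = k^k \cdot (n-1)^{k(n-1)} \cdot (n-2)^{(n-2)(n-k-1)},
\]
matching the claimed bounds (modulo what appears to be a minor typographical issue in the $\prod_1$ expression as printed). The only subtle step is the structural reduction in the second paragraph: one must be careful that after absorbing edges one stays inside $\mathbb{V}_n^k$ (the cut $S$ must remain a cut), and that the reduction to $H_k = K_k$ and then to the two-component form does not invoke edge-additions that would push $\kappa(G)$ above $k$. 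The rest is a bookkeeping application of Lemmas \ref{l2}, \ref{l3}, \ref{l5}.
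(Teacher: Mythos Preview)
Your proposal is correct and follows essentially the same route as the paper: pick a maximizer, use Lemma~\ref{l2} to force completeness inside the cut, inside each component, and between the cut and the components, reduce to two components, and then invoke Lemmas~\ref{l3} and~\ref{l5} to force one side to be a singleton, yielding $K_n^k$. The only differences are organizational: you package the repeated use of Lemma~\ref{l2} as a single ``edge-maximality'' principle and you explicitly justify that $\kappa(G^\ast)=k$ (a point the paper states without argument), whereas the paper splits the same reasoning into three claims and defers filling in $H_k=K_k$ until after applying Lemmas~\ref{l3} and~\ref{l5}.
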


\begin{proof}
Note that the degree sequence of $K_n^k$ is $k, \underbrace{n-2, n-2, \cdots, n-2}_{n-k-1}, \underbrace{n-1, n-1, \cdots, n-1}_k$. By the concepts of $\prod_1(G)$, $\prod_2(G)$ and routine calculations, we have 
\begin{eqnarray}&&\prod_1(K_n^k) = k^2 (n-1)^{2k} (n-2)^{2(n-k-1)}~ \text{and} \nonumber\\ \nonumber
&& \prod_2(K_n^k) = k^k (n-1)^{k(n-1)} (n-2)^{(n-2)(n-k-1)}.
\end{eqnarray}
It suffices to prove that $\prod_1(G) \leq \prod_1(K_n^k)$ and $\prod_2(G) \leq \prod_2(K_n^k)$, and the equalities hold if and only if $G \cong K_n^k$. 

If $k \geq n-1$, then $G \cong K_n^{n-1} \cong K_n$, and the theorem is true. If $1 \leq k \leq n-2$, then choose a graph $\overline{G}_1$ ($\overline{G}_2$, respectively) in $\mathbb{V}_n^k$  such that $\prod_1(\overline{G}_1)$ ($\prod_2(\overline{G}_2)$, respectively) is maximal.
Since $\overline{G}_i \ncong K_n$ with $i = 1, 2$, then $\overline{G}_i$ has a vertex cut set of size $k$. Let $V_i = \{v_{i1}, v_{i2}, \cdots, v_{ik}\}$ be the cut vertex set of $\overline{G}_i$.  Denoted $\omega(\overline{G}_i - V_i)$ by the number of  components of $\overline{G}_i - V_i$.  In order to prove our theorem, we start with several claims.

\begin{claim}\label{c1}
$\omega({\overline{G}_i - V_i}) = 2$ with $i =1, 2$.
\end{claim} 
\noindent{\small \it Proof.} 
We proceed to prove it by a contradiction. Assume that $\omega({\overline{G}_i - V_i}) \geq 3$ with $i = 1, 2$. Let $G_1, G_2, \cdots, G_{\omega({\overline{G}_i - V_i})}$ be the components of $\overline{G}_i - V_i$. Since $\omega({\overline{G}_i - V_i}) \geq 3$, then choose vertices $u \in V(G_1)$ and $v \in V(G_2)$. Then $V_i$ is still a $k$-vertex cut set of $\overline{G}_i+uv$. By Lemma \ref{l2}, we have $\prod_i(\overline{G}_i+uv) > \prod_i(\overline{G}_i)$, a contradiction to the choice of $\overline{G_i}$. Thus, this claim is proved.

Without loss of generality, suppose that $\overline{G}_i - V_i$ contains only two connected components, denoted by $G_{i1}$ and $G_{i2}$.

\begin{claim}\label{c2}
The induced subgraphs of $V(G_{i1}) \cup V_{i}$ and $V(G_{i2}) \cup V_{i}$  in $\overline{G}_i$ are complete subgraphs.
\end{claim}
\noindent{\small \it Proof.}
We use a contradiction to show it. Suppose that $\overline{G}_i[V(G_{i1}) \cup V_{i}]$  is not a complete subgraph of $\overline{G}_i$. 
Then there exists an edge $uv \notin \overline{G}_i[V(G_{i1}) \cup V_{i}]$. Since $\overline{G}_i[V(G_{i1}) \cup V_{i}] + uv \in \mathbb{V}_n,k$, by Lemma \ref{l2}, we have $\prod_i(\overline{G}_i[V(G_{i1}) \cup V_{i}] +uv) > \prod_i(\overline{G}_i[V(G_{i1}) \cup V_{i}])$, which is a contadiction. Thus, we show this claim.

By the above claims, we see that $G_{i1}$ and  $G_{i2}$ are complete subgraph of $\overline{G}_i$. Let $G_{i1} = K_{n'}$ and  $G_{i2}= K_{n''}$. Then we have $\overline{G}_i = K_{n'} \oplus  \overline{G}_i[V_i] \oplus K_{n''} $. 

\begin{claim}\label{c3}
Either $n'=1$ or $n''=1$.
\end{claim}
\noindent{\small \it Proof.}
On the contrary, assume that $n', n'' \geq 2$. Without loss of generility, $n' \leq n''.$ For $\prod_i(G)$, by Lemmas \ref{l3} and \ref{l5}, we have a new graph $\overline{G'}_i = K_{1} \oplus  \overline{G}_i[V_i] \oplus K_{n-k-1} $ such that $\prod_i(\overline{G'}_i) > \prod_i(\overline{G}_i)$ and $\overline{G'}_i \in \mathbb{V}_n^k$. This is a contradition to the choice of $\overline{G}_i$.  Thus, either $n'=1$ or $n''=1$, and this claim is showed.

By Lemma \ref{l2}, $\prod_i(K_{1} \oplus  K_{|H_k|} \oplus K_{n-k-1}) > \prod_i( K_{1} \oplus  \overline{G}_i[V(H_k)] \oplus K_{n-k-1})$. Since  $\prod_i(K_n^k) = \prod_i(K_{1} \oplus  K_{|V_i|} \oplus K_{n-k-1})$, then  $\prod_i(K_n^k)$ is maximal and this theorem holds.
\end{proof}

Since $K_n^k \in \mathbb{E}_n^k \subset \mathbb{V}_n^k$, then the following result is immediate.
\begin{thm}\label{t2}
Let $G$ be a graph in $\mathbb{E}_n^k$. Then
\begin{eqnarray}&&\prod_1(G) \leq k^2 (n-k)^2k (n-2)^{2(n-k-1)}  \text{and} \nonumber\\ \nonumber
&& \prod_2(G) \leq k^k (n-1)^{k(n-1)} (n-2)^{(n-2)(n-k-1)},
\end{eqnarray}
 where the equalities hold if and only if $G \cong K_n^k$. 
\end{thm}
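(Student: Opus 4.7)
The plan is to derive Theorem \ref{t2} as an immediate corollary of Theorem \ref{t1} via the containment $\mathbb{E}_n^k \subseteq \mathbb{V}_n^k$. First I would verify this containment: by Proposition \ref{p3}(i), every graph satisfies $\kappa(G) \leq \kappa'(G)$, so any $G$ with $\kappa'(G) \leq k$ automatically satisfies $\kappa(G) \leq k$. Thus every $G \in \mathbb{E}_n^k$ lies in $\mathbb{V}_n^k$, and Theorem \ref{t1} applied to $G$ yields exactly the two claimed upper bounds on $\prod_1(G)$ and $\prod_2(G)$.

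For sharpness and the equality characterization, I would invoke the fact already recorded in the paper that $K_n^k \in \mathbb{E}_n^k$. The direct degree-sequence computation performed at the start of the proof of Theorem \ref{t1} shows that $K_n^k$ attains both bounds, so the bounds are achieved inside the smaller family $\mathbb{E}_n^k$. Conversely, if $G \in \mathbb{E}_n^k$ attains either bound, then $G \in \mathbb{V}_n^k$ attains the same bound, and the uniqueness clause of Theorem \ref{t1} forces $G \cong K_n^k$.

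Since this is a corollary, there is essentially no obstacle. The only point worth a brief sanity check is that $K_n^k$ really belongs to $\mathbb{E}_n^k$: the vertex joined to only $k$ vertices of $K_{n-1}$ has degree $k$, so $\kappa'(K_n^k) \leq \delta(K_n^k) \leq k$, which together with $\kappa'(K_n^k) \leq n-1$ (Proposition \ref{p3}(i)) places $K_n^k$ in $\mathbb{E}_n^k$, as claimed in Section 2. No further lemma machinery (Lemmas \ref{l3}--\ref{l5}) is needed; everything has already been absorbed in the proof of Theorem \ref{t1}.
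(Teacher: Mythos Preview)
Your proposal is correct and follows exactly the paper's approach: the paper states Theorem~\ref{t2} as an immediate consequence of the containment $K_n^k \in \mathbb{E}_n^k \subset \mathbb{V}_n^k$ together with Theorem~\ref{t1}, and you have simply spelled out the details of that deduction.
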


In the rest of this paper, we consider the minimal mutiplicative Zagreb indices of graphs $G$ in $\mathbb{V}_n^k$ and $\mathbb{E}_n^k$. By Proposition \ref{p4} (ii), $G$ is a tree with $n$ vertices.  By Lemma \ref{l1} and routine calculations, we have 

\begin{thm} \label{t3}
Let $G$ be a graph in $\mathbb{V}_n^k$. Then 
$$\prod_1(G) \geq (n-1)^2~\text{and} ~ \prod_2(G) \geq 4^{n-2},$$ where the equalities hold if and only if $G \cong S_n$ and $G \cong P_n$, respectively.
\end{thm}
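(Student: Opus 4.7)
The plan is to reduce the minimization problem to trees and then invoke the known tree-extremal result. The key observation is that Proposition~\ref{p4} has two consequences working in tandem: part (i) says $\mathbb{V}_n^k$ is closed under edge deletion, while part (ii) says edge deletion strictly decreases both $\prod_1$ and $\prod_2$. Hence any minimizer in $\mathbb{V}_n^k$ must be edge-minimal in the class. Since every tree $T$ on $n$ vertices has $\kappa(T)=1\le k$ and therefore lies in $\mathbb{V}_n^k$, the edge-minimal connected $n$-vertex members of $\mathbb{V}_n^k$ are exactly the spanning trees, and any minimizer must be a tree on $n$ vertices.

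Once the problem is reduced to trees, Lemma~\ref{l1} identifies the extremal trees: the star $S_n$ is the unique minimizer of $\prod_1$ among $n$-vertex trees, and the path $P_n$ is the unique minimizer of $\prod_2$. (The second clause of Lemma~\ref{l1} as printed, ``$\prod_1(T) > P_n$'', is evidently intended as $\prod_2(T) > \prod_2(P_n)$.) Combined with the strict inequality in Proposition~\ref{p4}(ii), this yields the ``if and only if'' characterization of the equality cases.

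Finally, I would evaluate the extremal values directly. The star $S_n$ has one vertex of degree $n-1$ and $n-1$ leaves of degree $1$, so $\prod_1(S_n) = (n-1)^2 \cdot 1^{2(n-1)} = (n-1)^2$. The path $P_n$ has two endpoints of degree $1$ and $n-2$ internal vertices of degree $2$; of its $n-1$ edges, the two pendant edges each contribute $1\cdot 2 = 2$ and the remaining $n-3$ internal edges each contribute $2\cdot 2 = 4$, giving $\prod_2(P_n) = 2^{2}\cdot 4^{n-3} = 4^{n-2}$.

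There is no serious obstacle in this argument; it is essentially a matter of bookkeeping. The point that deserves the most care is the strict inequality step: one must combine Proposition~\ref{p4}(ii), which shows that any non-tree in $\mathbb{V}_n^k$ is strictly beaten by any of its spanning trees, with Lemma~\ref{l1}, which shows that any non-$S_n$ (respectively non-$P_n$) tree is strictly beaten by $S_n$ (respectively $P_n$). Only the combination of these two strict inequalities delivers the uniqueness of the extremal graph in each case.
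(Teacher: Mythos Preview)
Your proposal is correct and follows essentially the same approach as the paper: use Proposition~\ref{p4} to reduce the minimization over $\mathbb{V}_n^k$ to trees, then invoke Lemma~\ref{l1} and compute $\prod_1(S_n)$ and $\prod_2(P_n)$ directly. Your write-up is in fact more careful than the paper's one-line justification, particularly in making explicit how the strict inequalities from Proposition~\ref{p4}(ii) and Lemma~\ref{l1} combine to give the uniqueness of the extremal graphs, and in flagging the evident misprint in Lemma~\ref{l1}.
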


Note that  $P_n, S_n \in \mathbb{E}_n^k \subset \mathbb{V}_n^k$, then the following theorem is obvious.

\begin{thm} \label{t4}
Let $G$ be a graph in $\mathbb{E}_n^k$. Then 
$$\prod_1(G) \geq (n-1)^2~\text{and} ~ \prod_2(G) \geq 4^{n-2},$$ where the equalities hold if and only if $G \cong S_n$ and $G \cong P_n$, respectively.
\end{thm}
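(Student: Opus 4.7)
The plan is to deduce Theorem \ref{t4} almost immediately from Theorem \ref{t3} together with the containment $\mathbb{E}_n^k \subseteq \mathbb{V}_n^k$ already noted in the preamble to Theorem \ref{t2}. The containment follows from Proposition \ref{p3}(i): if $\kappa'(G) \leq k$ then $\kappa(G) \leq \kappa'(G) \leq k$, so any graph in $\mathbb{E}_n^k$ automatically lies in $\mathbb{V}_n^k$. Consequently, the lower bounds $\prod_1(G) \geq (n-1)^2$ and $\prod_2(G) \geq 4^{n-2}$ established in Theorem \ref{t3} carry over verbatim to every $G \in \mathbb{E}_n^k$.

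The only remaining point is to verify that the equality cases $S_n$ and $P_n$ genuinely belong to $\mathbb{E}_n^k$, so that the bounds are sharp in the edge-connectivity setting as well. Both $S_n$ and $P_n$ are trees, and every tree on at least two vertices contains a bridge, hence has edge connectivity exactly $1$. Since $k \geq 1$ in the definition of $\mathbb{E}_n^k$, we obtain $\kappa'(S_n) = \kappa'(P_n) = 1 \leq k$, so indeed $S_n, P_n \in \mathbb{E}_n^k$.

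Putting these two observations together, the proof will read: let $G \in \mathbb{E}_n^k$; by $\mathbb{E}_n^k \subseteq \mathbb{V}_n^k$ and Theorem \ref{t3}, the stated inequalities hold, with equality iff $G \cong S_n$ (respectively $G \cong P_n$); sharpness in $\mathbb{E}_n^k$ follows because $S_n, P_n \in \mathbb{E}_n^k$. There is essentially no obstacle here — the theorem is a corollary of Theorem \ref{t3}, and the authors clearly intend exactly this reduction, as signalled by the sentence immediately preceding the statement. The only item requiring a line of justification is the containment of the extremal trees in $\mathbb{E}_n^k$, which is a one-line observation about tree bridges.
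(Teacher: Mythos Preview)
Your proposal is correct and follows exactly the approach the paper intends: reduce to Theorem~\ref{t3} via the containment $\mathbb{E}_n^k \subseteq \mathbb{V}_n^k$ (from Proposition~\ref{p3}(i)), and note that $S_n, P_n \in \mathbb{E}_n^k$ for sharpness. The paper itself gives no proof beyond the sentence ``Note that $P_n, S_n \in \mathbb{E}_n^k \subset \mathbb{V}_n^k$, then the following theorem is obvious,'' so your write-up is simply a slightly more explicit version of the same one-line reduction.
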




\end{document}